\documentclass{amsart}
\usepackage{amssymb}
\usepackage{amsfonts}
\usepackage{amssymb}
\usepackage{amsmath}
\usepackage{amsthm}
\usepackage{enumerate}
\usepackage{tabularx}
\usepackage{centernot}
\usepackage[official]{eurosym}
\usepackage{amsthm,amssymb}

\usepackage{tikz}
\usetikzlibrary{decorations.pathmorphing,shapes}
\usetikzlibrary{trees}
\usepackage{bussproofs}

\def\prepQuaternary{%
    \ifnum\theLevel<4
        \errmessage{Hypotheses missing!}
    \fi%
    \edef\rrcurBox{\thecur{myBox}}
    \edef\rrcurScoreStart{\thecur{myScoreStart}}%
    \edef\rrcurCenter{\thecur{myCenter}}%
    \edef\rrcurScoreEnd{\thecur{myScoreEnd}}%
    \advance\theLevel by-1%
    \edef\rcurBox{\thecur{myBox}}
    \edef\rcurScoreStart{\thecur{myScoreStart}}%
    \edef\rcurCenter{\thecur{myCenter}}%
    \edef\rcurScoreEnd{\thecur{myScoreEnd}}%
    \advance\theLevel by-1%
    \edef\ccurBox{\thecur{myBox}}
    \edef\ccurScoreStart{\thecur{myScoreStart}}%
    \edef\ccurCenter{\thecur{myCenter}}%
    \edef\ccurScoreEnd{\thecur{myScoreEnd}}%
    \advance\theLevel by-1%
    \edef\lcurBox{\thecur{myBox}}
    \edef\lcurScoreStart{\thecur{myScoreStart}}%
    \edef\lcurCenter{\thecur{myCenter}}%
    \edef\lcurScoreEnd{\thecur{myScoreEnd}}%
}

\def\QuaternaryInf$#1\fCenter#2${%
    \prepQuaternary%
    \buildConclusion{#1}{#2}%
    \joinQuaternary%
    \resetInferenceDefaults%
    \ignorespaces%
}

\def\QuaternaryInfC#1{%
    \prepQuaternary%
    \buildConclusionC{#1}%
    \joinQuaternary%
    \resetInferenceDefaults%
    \ignorespaces%
}

\def\joinQuaternary{
    \setbox\myBoxA=\hbox{\theHypSeparation}%
    \lcurScoreEnd=\rrcurScoreEnd%
    \advance\lcurScoreEnd by\wd\rcurBox%
    \advance\lcurScoreEnd by\wd\lcurBox%
    \advance\lcurScoreEnd by\wd\ccurBox%
    \advance\lcurScoreEnd by3\wd\myBoxA%
    \displace=\lcurScoreEnd%
    \advance\displace by -\lcurScoreStart%
    \lcurCenter=.5\displace%
    \advance\lcurCenter by\lcurScoreStart%
    \ifx\rootAtBottomFlag\myTrue%
        \setbox\lcurBox=%
            \hbox{\box\lcurBox\unhcopy\myBoxA\box\ccurBox%
                      \unhcopy\myBoxA\box\rcurBox
                      \unhcopy\myBoxA\box\rrcurBox}%
    \else%
        \htLbox = \ht\lcurBox%
        \htAbox = \ht\myBoxA%
        \htCbox = \ht\ccurBox%
        \htRbox = \ht\rcurBox%
        \htRRbox = \ht\rrcurBox%
        \setbox\lcurBox=%
            \hbox{\lower\htLbox\box\lcurBox%
                  \lower\htAbox\copy\myBoxA\lower\htCbox\box\ccurBox%
                  \lower\htAbox\copy\myBoxA\lower\htRbox\box\rcurBox%
                  \lower\htAbox\copy\myBoxA\lower\htRRbox\box\rrcurBox}%
    \fi%
    \displace=\newCenter%
    \advance\displace by -.5\newScoreStart%
    \advance\displace by -.5\newScoreEnd%
    \advance\lcurCenter by \displace%
    \edef\curBox{\lcurBox}%
    \edef\curScoreStart{\lcurScoreStart}%
    \edef\curScoreEnd{\lcurScoreEnd}%
    \edef\curCenter{\lcurCenter}%
    \joinUnary%
}

\usepackage{tikz}
\usetikzlibrary{decorations.pathmorphing,shapes}

\renewcommand{\leq}{\leqslant}
\renewcommand{\geq}{\geqslant}

\makeatletter
\def\subsection{\@startsection{subsection}{3}%
  \z@{.5\linespacing\@plus.7\linespacing}{.3\linespacing}%
  {\bfseries\centering}}
\makeatother

\makeatletter
\def\subsubsection{\@startsection{subsubsection}{3}%
  \z@{.5\linespacing\@plus.7\linespacing}{.3\linespacing}%
  {\centering}}
\makeatother

\makeatletter
\def\myfnt{\ifx\protect\@typeset@protect\expandafter\footnote\else\expandafter\@gobble\fi}
\makeatother

\theoremstyle{definition}

\newtheorem{theorem}{Theorem}[section]
\newtheorem{definition}[theorem]{Definition}

\newtheorem{proposition}[theorem]{Proposition}
\newtheorem{example}[theorem]{Example}
\newtheorem{corollary}[theorem]{Corollary}

\newcounter{claimcounter}
\numberwithin{claimcounter}{theorem}

\newcommand*\dep{{=\mkern-1.2mu}}

\def\presuper#1#2%
  {\mathop{}%
   \mathopen{\vphantom{#2}}^{#1}%
   \kern-\scriptspace%
   #2}

\begin{document}

\title{A Finite Axiomatization of G-Dependence}
\thanks{The research of the author was supported by the Finnish Academy of Science and Letters (Vilho, Yrj\"o and Kalle V\"ais\"al\"a foundation). The author would like to thank Jouko V\"a\"an\"anen for introducing him to the topic of G-dependence and Tapani Hyttinen for an illuminating remark.}

\author{Gianluca Paolini}
\address{Department of Mathematics and Statistics,  University of Helsinki, Finland}  

\begin{abstract} We show that a form of dependence known as G-dependence (originally introduced by Grelling) admits a very natural finite axiomatization, as well as Armstrong relations. We also give an explicit translation between functional dependence and G-dependence.
\end{abstract}

\maketitle

\section{Introduction}


	In this paper we study a form of dependence originally introduced by Grelling \cite{grelling}, and recently analyzed by V\"a\"an\"anen \cite{vaananen_G_dep} in the context of database dependency theory and team semantics. The main objective of the paper is to provide a finite axiomatization of G-dependence, and to give an explicit translation between functional dependence and G-dependence. 
	
	First of all, let us explain this notion of dependence, and where it comes from. In \cite{grelling} Grelling lays the foundations of a mathematical theory of dependence, isolating several notions of dependence and noticing various properties that they satisfy. Among them, there is the notion of functional dependence, later axiomatized by Armstrong in the context of database theory (see \cite{armstrong}), and other notions of dependence familiar in combinatorics, specifically in the study of closure operators (see e.g. \cite{pregeo}). 
	Grelling also introduces a notion of dependence that fails to fit with any of the currently known forms of dependence studied in database dependency theory and combinatorics. He calls it {\em varequidependence}, but we will refer to it simply as {\em G-dependence} (G for Grelling). 
	From the perspective of database theory, G-dependence can be described as follows: a set of attributes $y$ G-depends on a set of attributes $x$ in the database $X$ if for any two rows $s, s'$ of $X$ for which exactly one $x_i \in x$ has different values, there exists at least one $y_j \in y$ such that $y_j$ has different values in $s$ and $s'$.
	
		To give an example of G-dependence consider Table \ref{salary}, where we see an example of a database storing informations about salaries of the (unfortunately not so hypothetical) university $U$. 
\begin{figure}[h]
$$\begin{array}{|c|c|c|c|}
\hline
\textbf{Name} & \textbf{Title} & \textbf{Years of Experience} & \textbf{Salary} \\
\hline
	 \text{John}  & \text{PhD} & \text{1} & \text{\euro{2200}} \\
	 \text{Marie} & \text{PhD} & \text{10}  & \text{\euro{2200}} \\
	 \text{Paolo} & \text{Professor} & \text{5} & \text{\euro{3500}} \\
	 \text{Sara}  & \text{Professor} & \text{7} & \text{\euro{3500}} \\
\hline
\end{array}
$$\caption{Salaries of university $U$  \label{salary}}\end{figure}
	First, we recall the definition of functional dependence, to compare the two notions: we say that a set of attributes $y$ functionally depends on a set of attributes $x$ in the database $X$ if for any two rows $s, s'$ of $X$ if $s$ and $s'$ agree on $x$, then they also agree on $y$. 
Now, according to this definition, the salary of an academic in the university $U$ depends on his/her academic title, and also on the combination of his/her academic title and his/her years of experience. This sounds very reasonable, what a good university university $U$ is, one would think. But if we look at the table we actually notice that both Dr. Marie and Dr. John have the same salary despite John has only 1 year of experience and Marie has 10! Thus, the claimed dependence of salary from academic title and years of experience seems very counterintuitive. On the other hand, if we appeal to G-dependence we see that this is not the case, since, letting the academic title fixed, more years of experience do {\em not} imply an increase in salary in the salary system of university $U$. One of the virtues of G-dependence is the solution of this pathology of functional dependence. G-dependence is in fact a form of dependence that respects very closely our intuitions about dependence. Given its intuitive appeal we want to analyze G-dependence in a mathematical fashion, using the tools from database dependency theory and team semantics. 
	
	We first give some background. Team semantics is a new logical framework, originally introduced by Hodges \cite{hodges} and later developed by V\"a\"an\"anen \cite{vaananen}, which aims at the construction of a logical theory of various phenomena such as (in)dependence, uncertainty and information. The common denominator of this line of research is the evaluation of logical formulas in function of so-called {\em teams}, i.e. {\em sets (or multisets)} of assignments to a given set of variables. This widens the scope of traditional logical systems, allowing for the modelling of new non-classical phenomena. For example, the use of several assignments allows to give meaning to logical atoms expressing various dependency relations among variables, and the consequent construction of extensions of first-order and propositional logic. This particular line of research is known as {\em dependence logic} \cite{vaananen}. Another scope of application of team semantics is in the realm of (quantum) information theory \cite{QTL} and probability logic \cite{measure_teams}. In fact, the multiplicity of assignments under which we evaluate formulas allows for the introduction of various probabilistic notions and the consequent construction of interesting logics. 
	
	Soon after its birth, dependence logic was brought to the rediscovery of a rich field of research active since the '70s, known as {\em database dependency theory} \cite{dat_book}. This part of database theory aims primarily at the analysis and axiomatization of the various forms of constraints that a database can be subject to. The fundamental connection between database theory and teams semantics is that any database can be immediately considered as a set of assignments (i.e. a team) and the other way around. Consequently, database dependency theory and dependence logic overlap significantly in many respects, and can be considered as a general framework for the study of dependence.

	We will then use this established technology to study our object of interest, i.e. G-dependence. In this paper will give a complete axiomatization of G-dependence, in the style of Armstrong's axiomatization of functional dependence \cite{armstrong}, a problem left open in V\"a\"an\"anen's treatment of G-dependence \cite{vaananen_G_dep}. We will also prove that G-dependence admits Armstrong relations, in the sense of \cite{armstrong}, \cite{beeri} and \cite{fagin}, and provide an explicit translation between G-dependence and functional dependence. Finally, in the style of the dependence logic of \cite{vaananen}, we will build an extension of first-order logic called G-dependence logic and use the above mentioned translation between G-dependence and functional dependence to conclude that G-dependence logic has the same expressive power of existential second-order logic.

\section{G-Dependence}\label{Sec_2}

	In this section we define what a team is, and see how to define G-dependence. We then define a logical language made of atomic expressions called {\em G-dependence atoms}, and provide a set of rules and axioms for this language under the intended semantics of G-dependence.

\smallskip
	
	Let $\mathrm{Var}$ be a countable set of symbols, called {\em attributes} or {\em individual variables}.
	
	\begin{definition}[Team]\label{team} A {\em team} $X$ with values in a set $A$ and domain $\mathrm{dom}(X) \subseteq \mathrm{Var}$ is a pair $(I, \tau)$ such that $I$ is a set and $\tau: I \rightarrow A^{\mathrm{dom}(X)}$ is a function.
\end{definition}

	Notice that a team is nothing but a database with multiple rows, i.e. the same tuple is allowed to occur several times. Elsewhere in the literature (e.g. \cite{measure_teams}) what we call a team is referred to as a multiteam. We only talk of teams here for simplicity. Given a team $X = (I, \tau)$ and $s \in I$ we denote the value $\tau(s)(x)$ simply as $x(s)$.
	
	\begin{example} In Figure \ref{dis} we see an example of a team $X = (\left\{ 0, 1, 2, 3 \right\}, \tau)$ with values in $2 = \left\{ 0, 1 \right\}$ and domain $\left\{ x_0, x_1, y_0, y_1 \right\}$. The values $x_i(s)$ and $y_i(s)$ are evident from the table, e.g. $x_0(0) = 0$, $x_1(3) = 0$ and $y_0(1) = 1$.
\begin{figure}[h]
$$\begin{array}{|c|c|c|c|c|}
\hline
\phantom{a} & x_0 & x_1 & y_0 & y_1 \\
\hline
	0 & 0 & 0 & 0 & 0 \\
	1 & 1 & 0 & 1 & 0 \\
	2 & 0 & 0 & 0 & 0 \\
	3 & 0 & 0 & 0 & 0 \\
\hline
\end{array}
$$\caption{Example of a team  \label{dis}}\end{figure}
\end{example}	

	\begin{definition}\label{G_dep} Let $X = (I, \tau)$ be a team and $x$ and $y$ finite sets of variables from $\mathrm{dom}(X)$. We say that $y$ G-depends on $x$, in symbols $X \models  {{\Bumpeq}\mkern-2mu}(x, y)$, if for every $s, s' \in I$, if there exists exactly one $x_i \in x$ such that $x_i(s) \neq x_i(s')$, then there exists at least one $y_j \in y$ such that $y_j(s) \neq y_j(s')$.
\end{definition}

	We will refer to formulas of the form ${{\Bumpeq}\mkern-2mu}(x, y)$ as in Definition \ref{G_dep}, as G-dependence atoms. We stress here that in expressions of the kind ${{\Bumpeq}\mkern-2mu}(x, y)$ we consider $x$ and $y$ as {\em sets} of variables, not sequences. Following this convention, we use expressions like ${{\Bumpeq}\mkern-2mu}(x, yz)$ to mean ${{\Bumpeq}\mkern-2mu}(x, y \cup z)$. Furthermore, if in the formula ${{\Bumpeq}\mkern-2mu}(x, y)$ the set $x$ is a singleton $\{ x_0 \}$ we simply write ${{\Bumpeq}\mkern-2mu}(x_0, y)$.

	\begin{example} As an example, for the team $X$ represented in Figure \ref{dis} we have:
	$$ X \models  {{\Bumpeq}\mkern-2mu}(x_0, y_0), \;\; X \models  {{\Bumpeq}\mkern-2mu}(x_0x_1, y_0), \;\; X \models {{\Bumpeq}\mkern-2mu}(x_0y_0, y_1), \;\; X \not\models {{\Bumpeq}\mkern-2mu}(x_0x_1, y_1).$$
\end{example}

	Notice that in analogy with functional dependence and similar forms of dependence, the G-dependence atom is {\em local}, i.e. the truth of $X \models  {{\Bumpeq}\mkern-2mu}(x, y)$ depends only on the values of the variables occurring in $x$ or $y$.

	As usual, we let:
	\begin{enumerate}[i)]
	\item $X \models \Sigma$ if $X \models \sigma$ for every $\sigma \in \Sigma$;
	\item $\Sigma \models \sigma$ if, for every team $X$, $X \models \Sigma$ implies $X \models \sigma$.
\end{enumerate}

\medskip	

	The goal of the present paper is to find a complete axiomatization of the relation $\Sigma \models \sigma$, where $\Sigma$ contains $G$-dependence atoms.  I.e. we want to find a fundamental set of principles (rules) that govern the properties of the G-dependence atoms, and we want to find them so that they are complete. That is, all the G-dependence atoms that semantically follow from the given ones, also deductively follow from them, i.e. follow from them according to the rules. 
	
	The deductive system for G-dependence atoms consists of the following six rules\footnote{The isolation of rules ($R_0$) and ($R'_0$) is due to Tapani Hyttinen.}, where for an atom ${{\Bumpeq}\mkern-2mu}(x, y)$ we let $x = \left\{ x_0, ..., x_{n-1} \right\}$, and, for $i < n$, we let $(x - x_i) y = \left\{ x_j \in x : j \neq i \right\} \cup y$.

\bigskip

\begin{prooftree}
\AxiomC{}
\LeftLabel{($A_0$)}
\UnaryInfC{${{\Bumpeq}\mkern-2mu}(\emptyset, y)$}
\end{prooftree}

\medskip

\begin{prooftree}
\AxiomC{}
\LeftLabel{($A_1$)}
\UnaryInfC{${{\Bumpeq}\mkern-2mu}(x, x)$}
\end{prooftree}

\medskip
	
\begin{prooftree}
\AxiomC{${{\Bumpeq}\mkern-2mu}(x, y)$}
\LeftLabel{($R_0$)}
\UnaryInfC{${{\Bumpeq}\mkern-2mu}(x_i, (x - x_i) y)$}
\end{prooftree}

\medskip

\begin{prooftree}
\AxiomC{${{\Bumpeq}\mkern-2mu}(x_0, (x-x_0) y)$}
\AxiomC{$\cdots$}
\AxiomC{${{\Bumpeq}\mkern-2mu}(x_{n-1}, (x-x_{n-1}) y)$}
\LeftLabel{($R'_0$)}
\TrinaryInfC{${{\Bumpeq}\mkern-2mu}(x_0 \cdots x_{n-1}, y)$}
\end{prooftree}

\medskip

\begin{prooftree}
\AxiomC{${{\Bumpeq}\mkern-2mu}(x, y)$}
\LeftLabel{($R_1$)}
\UnaryInfC{${{\Bumpeq}\mkern-2mu}(x, yz)$}
\end{prooftree}

\medskip

\begin{prooftree}
\AxiomC{${{\Bumpeq}\mkern-2mu}(z_0, y)$}
\AxiomC{$\cdots$}
\AxiomC{${{\Bumpeq}\mkern-2mu}(z_{n-1}, y)$}
\AxiomC{${{\Bumpeq}\mkern-2mu}(x_0, z_0 \cdots z_{n-1})$}
\LeftLabel{($R_2$)}
\QuaternaryInfC{${{\Bumpeq}\mkern-2mu}(x_0, y)$}
\end{prooftree}

\bigskip	

	We refer to rules with an empty set of premises as axioms, and consequently reserve the term rule to rules with at least one premise. A deduction with premises on a set of atoms $\Sigma$ is a sequence of atoms $(\sigma_0 , ... , \sigma_{n-1})$ such that each $\sigma_i$ is either an instance of the axioms of our deductive system, or an atom in $\Sigma$, or follows from one or more atoms of $\left\{ \sigma_0, ... , \sigma_{i-1} \right\}$ by one of its rules. We say that $\sigma$ is provable from $\Sigma$, in symbols $\Sigma \vdash \sigma$, if there is a deduction  $(\sigma_0 , ... , \sigma_{n-1})$ with premises on $\Sigma$ and $\sigma = \sigma_{n-1}$.

\section{Completeness}

	In this section we prove the main results of the paper, i.e. the completeness of the deductive system introduced in Section \ref{Sec_2} and the admissibility of the so-called Armstrong relations.
	
\smallskip

	We denote the set of natural numbers by $\omega$.
	
	\begin{theorem}\label{compl} Let $\Sigma \cup \left\{ \sigma \right\}$ be a set of G-dependence atoms. Then 
	$$\Sigma \vdash \sigma \;\; \text{ if and only if } \;\; \Sigma \models \sigma$$ 
I.e. the G-dependence deductive system is sound and complete.
\end{theorem}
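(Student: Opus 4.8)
The plan is to treat the two implications separately: soundness is a direct verification, while completeness proceeds by contraposition through the construction of a two-row counterexample team governed by a syntactic closure operator.

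\emph{Soundness.} First I would show that $\Sigma \vdash \sigma$ implies $\Sigma \models \sigma$ by checking that each axiom is valid and each rule preserves satisfaction in an arbitrary team $X = (I,\tau)$, after which the implication follows by induction on the length of a deduction. The two axioms are immediate: $(A_0)$ holds vacuously, since no variable can differ in the empty set, and $(A_1)$ holds because the unique differing variable of $x$ is itself a witness on the right. For $(R_0)$ I would split on a pair $s,s'$ with $x_i(s)\neq x_i(s')$: either $x_i$ is the only variable of $x$ that differs, so the hypothesis ${{\Bumpeq}\mkern-2mu}(x,y)$ supplies a witness in $y\subseteq (x-x_i)y$, or some other $x_k$ differs and is itself a witness in $(x-x_i)y$. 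Rule $(R'_0)$ is the converse packaging: if exactly one $x_i\in x$ differs, then all other variables of $x$ agree, so the witness provided by ${{\Bumpeq}\mkern-2mu}(x_i,(x-x_i)y)$ must already lie in $y$. Rule $(R_1)$ is trivial weakening on the right, and $(R_2)$ is transitivity through a witness: if $x_0$ differs, then ${{\Bumpeq}\mkern-2mu}(x_0,z_0\cdots z_{n-1})$ yields some $z_k$ that differs, whence ${{\Bumpeq}\mkern-2mu}(z_k,y)$ yields a witness in $y$.

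\emph{Completeness.} For the converse I would argue by contraposition, assuming $\Sigma\not\vdash\sigma$ with $\sigma = {{\Bumpeq}\mkern-2mu}(a,b)$ and producing a team $X$ with $X\models\Sigma$ but $X\not\models\sigma$. The first key observation is that, by $(R_0)$ and $(R'_0)$, the atom ${{\Bumpeq}\mkern-2mu}(x,y)$ is interderivable with the family $\{\,{{\Bumpeq}\mkern-2mu}(x_i,(x-x_i)y) : i<n\,\}$ of atoms with singleton left-hand side. In particular $\Sigma\not\vdash\sigma$ yields some $a_i\in a$ with $\Sigma\not\vdash {{\Bumpeq}\mkern-2mu}(a_i,(a-a_i)b)$; writing $u=a_i$ and $V=(a-a_i)b$, it then suffices to realize a single pair of rows differing exactly on $u$ and nowhere on $V$, because $a-u\subseteq V$ and $b\subseteq V$ force exactly one variable of $a$ to differ while no variable of $b$ does.

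To this end I would introduce the syntactic closure $\mathrm{cl}(V)=\{\,w\in\mathrm{Var}: \Sigma\vdash{{\Bumpeq}\mkern-2mu}(w,V)\,\}$ and take $X$ to consist of two rows $s,s'$ over the relevant variables, with values in $\{0,1\}$, agreeing precisely on $\mathrm{cl}(V)$. From $(A_1)$ and $(R_0)$ one gets $V\subseteq\mathrm{cl}(V)$, while $u\notin\mathrm{cl}(V)$ by the choice of $u$, so the pair $s,s'$ witnesses $X\not\models{{\Bumpeq}\mkern-2mu}(u,V)$ and hence $X\not\models\sigma$. The crux is to check $X\models\Sigma$: for a two-row team, satisfaction of ${{\Bumpeq}\mkern-2mu}(x,y)\in\Sigma$ can fail only when exactly one $x_i$ lies outside $\mathrm{cl}(V)$ while $y\subseteq\mathrm{cl}(V)$. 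In that case $(R_0)$ gives $\Sigma\vdash{{\Bumpeq}\mkern-2mu}(x_i,(x-x_i)y)$, all variables $z_0,\dots,z_{m-1}$ of $(x-x_i)y$ lie in $\mathrm{cl}(V)$ (so $\Sigma\vdash{{\Bumpeq}\mkern-2mu}(z_k,V)$ for each $k$), and one application of $(R_2)$ yields $\Sigma\vdash{{\Bumpeq}\mkern-2mu}(x_i,V)$, i.e.\ $x_i\in\mathrm{cl}(V)$, a contradiction.

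The routine half is soundness; the substance lies in the completeness construction, and within it in the verification that the two-row Armstrong relation satisfies every (possibly non-singleton) atom of $\Sigma$. This is exactly where $(R_2)$ is indispensable: the closure property of $\mathrm{cl}(V)$ under the hypotheses of $\Sigma$ is what both reduces G-dependence to functional-dependence-style reasoning on singleton atoms and guarantees that no atom of $\Sigma$ is violated by the constructed pair. I would pay particular attention to the degenerate cases $a=\emptyset$ (where $\sigma$ is an instance of $(A_0)$, so $\Sigma\vdash\sigma$ holds trivially) and $(x-x_i)y=\emptyset$ (handled directly by the weakening rule $(R_1)$ rather than $(R_2)$), in order to keep the closure argument uniform.
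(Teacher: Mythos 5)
Your proof is correct and follows essentially the same route as the paper: the same case analysis for soundness, and for completeness the same reduction via $(R_0)/(R'_0)$ to an underivable singleton-left atom ${{\Bumpeq}\mkern-2mu}(u,V)$, followed by the same two-row Boolean team and the same $(R_2)$ argument --- your $\mathrm{cl}(V)$ is precisely the paper's set $V_1$, with $V_0$ its complement. Your explicit treatment of the degenerate case $(x-x_i)y=\emptyset$ via $(R_1)$ is a point the paper passes over silently, but otherwise the two arguments coincide.
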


	\begin{proof} Regarding soundness, the soundness of $(A_0)$, $(A_1)$ and $(R_1)$ is clear. We prove the soundness of $(R_0)$. Let $X = (I, \tau) \not\models {{\Bumpeq}\mkern-2mu}(x_i, (x - x_i) y)$, then there exist $s, s' \in I$ such that $x_i(s) \neq x_i(s')$ and $z_j(s) = z_j(s')$ for every $z_j \in (x - x_i) y$. Thus, $X \not\models {{\Bumpeq}\mkern-2mu}(x, y)$, since $x_i(s) \neq x_i(s')$, all the $x_j$ different than $x_i$ are so that $x_j(s) = x_j(s')$ and furthermore for every $y_j \in y$ we have $y_j(s) = y_j(s')$.

We prove the soundness of $(R'_0)$. Let $X = (I, \tau) \not\models {{\Bumpeq}\mkern-2mu}(x, y)$, then there exist $s, s' \in I$ and $x_i \in x$ such that $x_i(s) \neq x_i(s')$, all the $x_j$ different than $x_i$ are so that $x_j(s) = x_j(s')$ and furthermore for every $y_j \in y$ we have $y_j(s) = y_j(s')$. Thus, $X \not\models {{\Bumpeq}\mkern-2mu}(x_i, (x - x_i) y)$, since $x_i(s) \neq x_i(s')$ but $z_j(s) = z_j(s')$ for every $z_j \in (x - x_i) y$.

Finally, we prove the soundness of ($R_2$). Let $X = (I, \tau) \not\models {{\Bumpeq}\mkern-2mu}(x_0, y)$, then there exist $s, s' \in I$ such that $x_0(s) \neq x_0(s')$ and $y_j(s) = y_j(s')$ for every $y_j \in y$. If there exists $i < n$ such that $z_i(s) \neq z_i(s')$, then $X \not\models {{\Bumpeq}\mkern-2mu}(z_i, y)$. If, on the other hand, $z_i(s) = z_i(s')$ for every $i < n$, then $X \not\models {{\Bumpeq}\mkern-2mu}(x_0, z_0 \cdots z_{n-1})$.
	
Regarding completeness, suppose that $\Sigma \nvdash \sigma$, and let 
$$\sigma = {{\Bumpeq}\mkern-2mu}(x_0 \cdots x_{n-1}, y_0 \cdots y_{m-1}) = {{\Bumpeq}\mkern-2mu}(x, y).$$
	We want to build a team $X$ with domain $\mathrm{Var}$ and values in $2 = \left\{ 0, 1 \right\}$ such that $X \models \Sigma$ and $X \not\models \sigma$. W.l.o.g. we can assume that $\Sigma$ is deductively closed, i.e. if $\Sigma \vdash \pi$, then $\pi \in \Sigma$. Because of axiom ($A_0$) we must have that $n \geq 1$. Furthermore, 
there has to be $i < n$ such that ${{\Bumpeq}\mkern-2mu}(x_i, (x-x_i) y) \not\in \Sigma$ and $x_i \not\in y$. First of all, suppose that ${{\Bumpeq}\mkern-2mu}(x_i, (x-x_i) y) \in \Sigma$, for every $i < n$. Then, by ($R'_0$), we have ${{\Bumpeq}\mkern-2mu}(x, y) \in \Sigma$, contradicting the assumption. Let then $i < n$ such that ${{\Bumpeq}\mkern-2mu}(x_i, (x-x_i) y) \not\in \Sigma$, and suppose that $x_i \in y$, then by consecutively applying ($A_1$) and ($R_1$), we obtain ${{\Bumpeq}\mkern-2mu}(x_i, (x-x_i) y) \in \Sigma$, which is contradictory.
	Thus, there exists $i < n$ such that ${{\Bumpeq}\mkern-2mu}(x_i, (x-x_i) y) \not\in \Sigma$ and $x_i \not\in y$. W.l.o.g. ${{\Bumpeq}\mkern-2mu}(x_0, (x-x_0) y) \not\in \Sigma$ and $x_0 \not\in y$. We want to define $V_0, V_1 \subseteq \mathrm{Var}$ such that $V_0 \cap V_1 = \emptyset$ and $V_0 \cup V_1 = \mathrm{Var}$, to be used in the construction of the team $X$. Let then $\left\{ v_i : i < \omega \right\} = \mathrm{Var}$, and for $i < \omega$ let:
		\[ \begin{cases} v_i \in V_0 \;\;\;\;\; \text{ if } {{\Bumpeq}\mkern-2mu}(v_i, x_1 \cdots x_{n-1}y) \not\in \Sigma \\
						  v_i \in V_1 \;\;\;\;\; \text{ otherwise. } \end{cases} \]
Notice that $x_0 \in V_0$, since we are assuming that ${{\Bumpeq}\mkern-2mu}(x_0, (x-x_0) y) \not\in \Sigma$. Furthermore, by ($A_1$) and ($R_1$), $\left\{ x_1, ..., x_{n-1}, y_0, ..., y_{m-1} \right\} \subseteq V_1$.
Let now $X = (2, \tau)$ be the team such that for every $v \in \mathrm{Var}$
\[ \begin{cases} v(0) = 0 \text{ and } v(1) = 1 \;\;\;\;\; \text{ if } v \in V_0 \\
						  v(0) = v(1) = 0 \,\;\;\;\;\;\;\;\;\;\;\;\;\;\;\, \text{ if } v \in V_1. \end{cases} \]
That is, $X$ looks as in Figure \ref{flow}.
	\begin{figure}[h]
$$\begin{array}{|c|c|c|c|c|c|c|c|c|c|}
\hline
& x_0 & x_1 & \cdots & x_{n-1} & y_0 & \cdots & y_{m-1} & V_0 & V_1 \\
\hline
0 & 0   & 0 & \cdots   & 0       & 0 & \cdots   & 0       & 0    & 0    \\
\hline
1 & 1	& 0 & \cdots   & 0       & 0 & \cdots   & 0       & 1    & 0    \\
\hline
\end{array}
$$\caption{The team $X$  \label{flow}}\end{figure}	
Evidently, $X \not\models \sigma$ (recall that $x_0 \not\in y$). We want to show that $X \models \Sigma$. Because of the rules ($R_0$) and ($R'_0$), it suffices to show that for every $\sigma' = {{\Bumpeq}\mkern-2mu}(z_0, w_0 \cdots w_{k-1}) \in \Sigma$ we have $X \models \sigma'$. In fact, suppose this is the case and let ${{\Bumpeq}\mkern-2mu}(x', y') = {{\Bumpeq}\mkern-2mu}(x'_0 \cdots x'_{k-1}, y') \in \Sigma$, we want to show that $X \models  {{\Bumpeq}\mkern-2mu}(x', y')$. By rule ($R_0$), for every $i < k$ we have ${{\Bumpeq}\mkern-2mu}(x'_i, (x'-x'_i)y') \in \Sigma$, since $\Sigma$ is deductively closed. Thus, for every $i < k$, we have $X \models {{\Bumpeq}\mkern-2mu}(x'_i, (x'-x'_i)y')$, since we are assuming that for all $\sigma' = {{\Bumpeq}\mkern-2mu}(z_0, w_0 \cdots w_{k-1}) \in \Sigma$ we have $X \models \sigma'$. Hence, by the soundness of rule ($R'_0$), we can conclude that $X \models {{\Bumpeq}\mkern-2mu}(x', y')$, as wanted.  Let then $\sigma' = {{\Bumpeq}\mkern-2mu}(z_0, w_0 \cdots w_{k-1}) \in \Sigma$. If $z_0 \in V_1$ or $\left\{ w_0, ..., w_{k-1} \right\} \cap V_0 \neq \emptyset$, then of course $X \models \sigma'$. Suppose then that $z_0 \in V_0$ and  $\left\{ w_0, ..., w_{k-1} \right\} \subseteq V_1$. Then for every $i < k$ we have ${{\Bumpeq}\mkern-2mu}(w_i, x^*y) \in \Sigma$, where $x^* = \left\{ x_1, ..., x_{n-1} \right\}$. Thus, because of ($R_2$), we have:
\begin{prooftree}
\AxiomC{$\Sigma$}
\UnaryInfC{${{\Bumpeq}\mkern-2mu}(w_0, x^*y)$}
\AxiomC{$\Sigma$}
\UnaryInfC{$\phantom{(i)} \cdots \phantom{(i)}$}
\AxiomC{$\Sigma$}
\UnaryInfC{${{\Bumpeq}\mkern-2mu}(w_{k-1}, x^*y)$}
\AxiomC{$\Sigma$}
\UnaryInfC{${{\Bumpeq}\mkern-2mu}(z_0, w_0 \cdots w_{k-1})$}
\LeftLabel{($R_2$)}
\QuaternaryInfC{${{\Bumpeq}\mkern-2mu}(z_0, x^*y)$}
\end{prooftree}

\smallskip
\noindent And so ${{\Bumpeq}\mkern-2mu}(z_0,  x^*y) \in \Sigma$, which implies that $z_0 \not\in V_0$, a contradiction.
\end{proof}

	Using the ``disjoint union" technique of Beeri et al. (cf. \cite{beeri} and \cite{fagin}) it is now immediate to conclude the existence of Armstrong relations for G-dependence.

	\begin{theorem}[Existence of Armstrong Relations] Let $\Sigma$ be a set of G-dependence atoms. Then there exists a team $X$, called an Armstrong relation, such that
	$$X \models \sigma \text{\; iff \;} \Sigma \vdash \sigma.$$
\end{theorem}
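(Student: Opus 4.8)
The plan is to combine the completeness theorem with a disjoint-union construction, following Beeri et al. By Theorem \ref{compl} it suffices to produce a single team $X$ with $X \models \Sigma$ and with $X \not\models \sigma$ for every atom $\sigma$ such that $\Sigma \nvdash \sigma$. Indeed, if $\Sigma \vdash \sigma$ then $\Sigma \models \sigma$ by soundness, so $X \models \sigma$ because $X \models \Sigma$; and if $\Sigma \nvdash \sigma$ then $X \not\models \sigma$ by construction. As in the proof of Theorem \ref{compl}, I would assume w.l.o.g. that $\Sigma$ is deductively closed, so that $\Sigma \vdash \pi$ and $\pi \in \Sigma$ coincide.

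For each atom $\sigma$ with $\sigma \notin \Sigma$, the completeness proof already supplies a two-row team $X_\sigma = (2, \tau_\sigma)$ over domain $\mathrm{Var}$ with values in $\{0,1\}$ such that $X_\sigma \models \Sigma$ and $X_\sigma \not\models \sigma$. I would then let $X$ be the disjoint union of the $X_\sigma$, taking the index set to be $\bigsqcup_{\sigma \notin \Sigma} I_\sigma$ and relabelling values so that, across distinct components, every non-constant variable takes distinct values. Concretely, put $C = \{ v \in \mathrm{Var} : {{\Bumpeq}\mkern-2mu}(v, \emptyset) \in \Sigma \}$; for $v \in C$ let $v$ be identically $0$ on $X$, and for $v \notin C$ give $v$, on the component of $\sigma$, the tagged value $(\sigma, v(s))$ computed in $X_\sigma$. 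Since every $v \in C$ is forced to be constant in each model of $\Sigma$ (the atom ${{\Bumpeq}\mkern-2mu}(v, \emptyset)$ literally asserts constancy of $v$), in particular in each $X_\sigma$, this relabelling does not alter the agreement pattern inside any single component.

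The verification of $X \not\models \sigma$ for $\sigma \notin \Sigma$ is immediate: the witnessing pair of rows inside $X_\sigma$ still witnesses the violation in $X$, because an injective relabelling of values (while leaving the already-constant variables of $C$ constant) preserves exactly which variables agree and which differ on that pair. For $X \models \Sigma$ I would split the two rows $s, s'$ under consideration into the within-component and the cross-component cases. Within a component the pattern of agreements equals that of $X_\sigma$, so every atom of $\Sigma$ is satisfied there. Across components, a variable $v$ agrees on $s, s'$ precisely when $v \in C$; hence, for an atom ${{\Bumpeq}\mkern-2mu}(x, y) \in \Sigma$, the set of $x_i \in x$ that differ is exactly $\{ x_i \in x : x_i \notin C \}$, and the antecedent of the atom is triggered only when this set is a singleton $\{ x_i \}$.

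The crux---and the step I expect to be the main obstacle---is to show that in this singleton case some $y_j \in y$ also lies outside $C$, so that the consequent holds; equivalently, that the naive disjoint union (which would make \emph{all} variables differ across components and thereby falsify any atom ${{\Bumpeq}\mkern-2mu}(x_0, \emptyset) \in \Sigma$) must be corrected exactly on the constant variables. Suppose, toward a contradiction, that $x_j \in C$ for all $j \neq i$ and $y_j \in C$ for all $j$. From ${{\Bumpeq}\mkern-2mu}(x, y) \in \Sigma$, rule $(R_0)$ gives ${{\Bumpeq}\mkern-2mu}(x_i, (x - x_i) y) \in \Sigma$; feeding this together with the premises ${{\Bumpeq}\mkern-2mu}(w, \emptyset) \in \Sigma$ for each $w \in (x - x_i) y$ into rule $(R_2)$ (with the role of its conclusion's second argument played by $\emptyset$) yields ${{\Bumpeq}\mkern-2mu}(x_i, \emptyset) \in \Sigma$, i.e. $x_i \in C$, contradicting that $\{ x_i \}$ was the non-constant part of $x$. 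Hence some $y_j \notin C$ differs across components and the consequent holds, completing $X \models \Sigma$ and with it the proof.
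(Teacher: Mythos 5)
Your proposal is correct and follows essentially the same route as the paper: the same disjoint-union construction of Beeri et al., reusing the two-row teams supplied by the completeness proof, with the same correction that collapses every variable $v$ with ${{\Bumpeq}\mkern-2mu}(v,\emptyset) \in \Sigma$ to a constant, and the same essential use of rule $(R_2)$ with empty right-hand side. The only (harmless) difference is organizational: the paper first reduces to atoms with singleton left side via $(R_0)$/$(R'_0)$ and case-splits on whether all right-side variables are constant, whereas you case-split on whether the two rows lie in one component or in two and obtain the needed non-constant $y_j$ by the $(R_0)$-plus-$(R_2)$ contradiction --- the same deductive ingredients, slightly rearranged.
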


	\begin{proof} W.l.o.g. $\Sigma$ is deductively closed. Let $\Sigma^*$ be the set of G -dependence atoms not in $\Sigma$ and $(\sigma_{2i})_{i < k \leq \omega}$ an enumeration of $\Sigma^*$. For every $\sigma = \sigma_{2i} \in \Sigma^*$ we have that $\Sigma \nvdash \sigma$, and so by the proof of Theorem \ref{compl}, we can find $X_{2i} = (\left\{ 2i, 2i + 1 \right\}, \tau_{2i})$, with $\tau_{2i}: \left\{ 2i, 2i + 1 \right\} \rightarrow \left\{ 2i, 2i + 1 \right\}^{\mathrm{Var}}$, such that $X_{2i} \models \Sigma$ and $X_{2i} \not\models \sigma$. Let $n = 2(k+1)$ if $k < \omega$, and $n = \omega$ otherwise. Let $X = (\left\{ i: i < n \right\}, \tau)$ be such that, for $i < k$ and $v \in \mathrm{Var}$, we let
	\[\tau(2i)(v) = \begin{cases} \tau_{2i}(2i)(v) \;\; \text{ if } {{\Bumpeq}\mkern-2mu}(v, \emptyset) \not\in \Sigma \\
						          0 \;\;\;\;\;\;\;\;\;\;\;\;\;\;\; \text{ otherwise } \end{cases}\]
and
	\[\tau(2i+1)(v) = \begin{cases} \tau_{2i}(2i+1)(v) \;\; \text{ if } {{\Bumpeq}\mkern-2mu}(v, \emptyset) \not\in \Sigma \\
						          0 \;\;\;\;\;\;\;\;\;\;\;\;\;\;\;\;\;\;\;\;\; \text{ otherwise. } \end{cases}\]
We claim that 
	$$X \models \sigma \text{ \; iff \;} \sigma \in \Sigma.$$
If $\sigma \not\in \Sigma$, then $\sigma = \sigma_{2i} \in \Sigma^*$ for some $i < k$ and so the rows $2i$ and  $2i + 1$ of $X$ witness that $X \not\models \sigma$. 
Let $\sigma \in \Sigma$, we show that $X \models \sigma$. As in the proof of Theorem \ref{compl}, it suffices to show that $X \models \sigma$ for $\sigma = {{\Bumpeq}\mkern-2mu}(x_0, y_0 \cdots y_{m-1})$. Thus, suppose that $\sigma = {{\Bumpeq}\mkern-2mu}(x_0, y_0 \cdots y_{m-1})$. There are two cases.

\smallskip
\noindent {\bf Case 1.} ${{\Bumpeq}\mkern-2mu}(y_j, \emptyset) \in \Sigma$, for every $j < m$. Because of rule $(R_2)$ we have

\smallskip 

\begin{prooftree}
\AxiomC{$\Sigma$}
\UnaryInfC{${{\Bumpeq}\mkern-2mu}(y_0, \emptyset)$}
\AxiomC{$\Sigma$}
\UnaryInfC{$\phantom{(i)} \cdots \phantom{(i)}$}
\AxiomC{$\Sigma$}
\UnaryInfC{${{\Bumpeq}\mkern-2mu}(y_{m-1}, \emptyset)$}
\AxiomC{$\Sigma$}
\UnaryInfC{${{\Bumpeq}\mkern-2mu}(x_0, y_0 \cdots y_{m-1})$}
\LeftLabel{($R_2$)}
\QuaternaryInfC{${{\Bumpeq}\mkern-2mu}(x_0, \emptyset)$}
\end{prooftree}

\smallskip 

\noindent Thus, it is clear that $X \models \sigma$, because $x_0(s) = 0$ for every $s < n$.

\smallskip
 
\noindent {\bf Case 2.} There exists $j^* < m$ such that ${{\Bumpeq}\mkern-2mu}(y_{j^*}, \emptyset) \not\in \Sigma$. To show that $X \models \sigma$, assume that $x_0(s) \neq x_0(s')$ for some $s < s' < n$. Since ${{\Bumpeq}\mkern-2mu}(y_{j^*}, \emptyset) \not\in \Sigma$, we have:
$$ y_{j^*}(s) = \tau_{2i}(s)(y_{j^*}) \text{ and } y_{j^*}(s') = \tau_{2i'}(s')(y_{j^*}).$$
If $2i \neq 2{i'}$, then $y_{j^*}(s) \neq y_{j^*}(s')$ by definition and we are done.

Otherwise, if $2i = 2{i'}$, $s = 2i$ and $s' = 2i +1$, then, since $X_{2i} \models \sigma$, there exists $j < m$ such that $\tau_{2i}(s)(y_j) \neq \tau_{2i}(s')(y_j)$, which implies ${{\Bumpeq}\mkern-2mu}(y_{j}, \emptyset) \not\in \Sigma$ (as $X_{2i} \models \Sigma$). Thus,
$$ y_{j}(s) = \tau_{2i}(s)(y_j) \neq \tau_{2i}(s')(y_j)= y_{j}(s'),$$
and so we are done.

%
\end{proof}

\section{G-Dependence Logic}

	In this short section we give an explicit translation between the G-dependence atom and the dependence atom, and vice versa, thus answering one of V\"a\"an\"anen's questions from \cite{vaananen_G_dep} (see the question after Proposition~2). We use this translation and results from \cite[Chapter 6]{vaananen} to show that the expressive power of a natural logic built using G-dependence atoms (which we will call G-dependence logic) has the same expressive power as $\Sigma^1_{1}$, i.e. existential second-order logic. Notice that the axiomatization of $G$-dependence given in the previous section is not an axiomatization of G-dependence logic as defined below, but only an axiomatization of the G-dependence atoms. 
		
	\begin{definition}[Functional Dependence]\label{G_dep} Let $X = (I, \tau)$ be a team and $x$ and $y$ finite sets of variables from $\mathrm{dom}(X)$. We say that $y$ (functionally) depends on $x$, in symbols $X \models  \dep(x, y)$, if for every $s, s' \in I$, if $x(s) = x(s')$, then $y(s) = y(s')$.
\end{definition}



	\begin{proposition}\label{Jouko_prop_1} Let $X$ be a team and $x$ and $y$ finite sets of variables from $\mathrm{dom}(X)$. Then\footnote{The conjunctions below are taken in the metalanguage, they simply mean that for every $i < n$ the corresponding formula is true in X.},
	\begin{equation} X \models \dep(x, y) \;\; \Leftrightarrow \;\; \bigwedge_{i < n} X \models  \dep(x, y_i) \;\; \Leftrightarrow \;\; \bigwedge_{i < n} X \models  {{\Bumpeq}\mkern-2mu}(y_i, x),
\end{equation}
\begin{equation} X \models {{\Bumpeq}\mkern-2mu}(x, y) \;\; \Leftrightarrow \;\; \bigwedge_{i < n} X \models  {{\Bumpeq}\mkern-2mu}(x_i, (x-x_i)y) \;\; \Leftrightarrow \;\; \bigwedge_{i < n}  X \models \dep((x-x_i)y, x_i).
\end{equation}
\end{proposition}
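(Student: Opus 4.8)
The plan is to prove Proposition~\ref{Jouko_prop_1} by unwinding the definitions of functional dependence (Definition of $\dep$) and G-dependence (Definition~\ref{G_dep}) directly, treating the two displayed equivalence chains separately. Throughout I write $x = \{x_0, \ldots, x_{n-1}\}$ and $y = \{y_0, \ldots, y_{n-1}\}$ as in the statement, and I work with arbitrary $s, s' \in I$.

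For the first chain, the left equivalence $X \models \dep(x,y) \Leftrightarrow \bigwedge_{i<n} X \models \dep(x, y_i)$ is the standard coordinatewise splitting of functional dependence: agreement on all of $y$ is logically the conjunction over $i$ of agreement on $y_i$, so both sides unfold to the same condition ``for all $s,s'$, if $x(s)=x(s')$ then $y_i(s)=y_i(s')$ for each $i$.'' The interesting equality is the right one, $X \models \dep(x, y_i) \Leftrightarrow X \models {{\Bumpeq}\mkern-2mu}(y_i, x)$. Here I would simply note that both atoms have a single variable on one side, and spell out the contrapositive forms. The atom $\dep(x, y_i)$ fails exactly when some $s,s'$ agree on $x$ but disagree on $y_i$; the atom ${{\Bumpeq}\mkern-2mu}(y_i, x)$ fails exactly when some $s,s'$ have $y_i(s) \neq y_i(s')$ (the unique-coordinate condition on the singleton $\{y_i\}$ is automatic) yet agree on every $x_j \in x$. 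These two failure conditions are literally the same pair of requirements on $(s,s')$, so the atoms are equivalent. I expect this to be a short observation once the contrapositives are written out.

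For the second chain, the left equivalence $X \models {{\Bumpeq}\mkern-2mu}(x,y) \Leftrightarrow \bigwedge_{i<n} X \models {{\Bumpeq}\mkern-2mu}(x_i, (x-x_i)y)$ is essentially the content of the soundness arguments for rules $(R_0)$ and $(R'_0)$ already given in the proof of Theorem~\ref{compl}: those arguments show that a counterexample to ${{\Bumpeq}\mkern-2mu}(x,y)$ (a pair $s,s'$ differing in exactly one $x_i$ and agreeing on all of $y$) is exactly a counterexample to some ${{\Bumpeq}\mkern-2mu}(x_i, (x-x_i)y)$, and conversely. I would cite that soundness computation rather than repeat it. The right equivalence $X \models {{\Bumpeq}\mkern-2mu}(x_i, (x-x_i)y) \Leftrightarrow X \models \dep((x-x_i)y, x_i)$ is then an instance of the singleton equivalence already established in the first chain, namely ${{\Bumpeq}\mkern-2mu}(v, w) \Leftrightarrow \dep(w, v)$ with $v = x_i$ and $w = (x-x_i)y$: since $x_i$ is a single variable, the G-dependence atom ${{\Bumpeq}\mkern-2mu}(x_i, (x-x_i)y)$ coincides with the functional dependence atom $\dep((x-x_i)y, x_i)$ by the same contrapositive comparison.

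The main obstacle, such as it is, lies in handling the unique-coordinate clause in the definition of G-dependence carefully when the antecedent side is \emph{not} a singleton: the G-dependence atom ${{\Bumpeq}\mkern-2mu}(x,y)$ quantifies over pairs differing in \emph{exactly one} $x_i$, which is strictly weaker than the all-of-$x$ agreement governing $\dep(x,y)$. This is precisely why the second chain must route through the reduction to singletons via $(R_0)/(R'_0)$ rather than asserting a direct equivalence between ${{\Bumpeq}\mkern-2mu}(x,y)$ and a single functional dependence atom. I would therefore be careful to establish the singleton identity ${{\Bumpeq}\mkern-2mu}(v,w) \Leftrightarrow \dep(w,v)$ first, as a reusable lemma, and then invoke it in both chains; once that identity and the $(R_0)/(R'_0)$ decomposition are in hand, the rest is bookkeeping.
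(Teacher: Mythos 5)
Your proposal is correct and takes essentially the same route as the paper: the first equivalence in each chain is handled identically (coordinatewise splitting, respectively the soundness of $(R_0)$ and $(R'_0)$ from the proof of Theorem~\ref{compl}), and your reusable singleton identity ${{\Bumpeq}\mkern-2mu}(v, w) \Leftrightarrow \dep(w, v)$ is precisely the statement the paper imports as \cite[Lemma 1, Item 5]{vaananen_G_dep}. The only difference is that you verify this identity inline via the contrapositive rather than citing it, which makes the argument self-contained but does not change the structure of the proof.
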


	\begin{proof} Concerning (1), the first equivalence is immediate, while the second follows directly from \cite[Lemma 1, Item 5]{vaananen_G_dep}. Concerning (2), the first equivalence is because of the soundness of rules ($R_0$) and ($R'_0$), while the second follows again from \cite[Lemma 1, Item 5]{vaananen_G_dep}.
\end{proof}

	We now describe how to extend first-order logic into two new logics: {\em $G$-dependence logic} and {\em dependence logic}. 
	Given a vocabulary $L$, we define $\mathrm{GDep}(L)^-$ (resp. $\mathrm{Dep}(L)^-$) to be the set of atomic and negation of atomic $L$-formulas ($L$-literals), identity atoms and $G$-dependence atoms (resp. dependence atoms). We then define $\mathrm{GDep}(L)$ (resp. $\mathrm{Dep}(L)$) to be the language obtained closing $\mathrm{GDep}(L)^-$ (resp. $\mathrm{Dep}(L)^-$) under $\wedge$, $\vee$, $\exists$ and $\forall$ (no negations). Given $\phi \in \mathrm{GDep}(L)$ (or $\mathrm{Dep}(L)$) we denote by $\mathrm{Free}(\phi)$ the set of free variables occurring in $\phi$.
	
	\begin{definition}[Semantics]\label{semantics} Let $L$ be a vocabulary, $\mathcal{M}$ an $L$-structure, $\phi \in \mathrm{GDep}(L)$ (resp. $\phi \in \mathrm{Dep}(L)$) and $X = (I, \tau)$ a team with values in the domain of $\mathcal{M}$ and $\mathrm{Free}(\phi) \subseteq \mathrm{dom}(X)$. We define $\mathcal{M} \models_X \phi$ as follows:
	\begin{enumerate}[(1)]
	\item $\phi$ is ${{\Bumpeq}\mkern-2mu}(x, y)$ (resp. $\dep(x, y)$). Then $\mathcal{M} \models_X \phi$ iff $X \models {{\Bumpeq}\mkern-2mu}(x, y)$ (resp. $X \models~\dep(x, y))$.
	\item $\phi$ is a literal. Then $\mathcal{M} \models_X \phi$ iff for every $i \in I$, $\mathcal{M} \models_{\tau(i)} \phi$ in the usual first-order sense.
	\item $\phi$ is $\psi \wedge \theta$. Then $\mathcal{M} \models_X \phi$ iff $\mathcal{M} \models_X \psi$ and $\mathcal{M} \models_X \theta$.
	\item $\phi$ is $\psi \vee \theta$. Then $\mathcal{M} \models_X \phi$ iff there exist $J, K \subseteq I$ such that $J \cup K = I$ and $(J, \tau \restriction{J}) \models \psi$ and $(K, \tau \restriction{K}) \models \theta$ ($\restriction$ denotes restriction of functions as usual).
	\item $\phi$ is $\exists x \psi$. Then $\mathcal{M} \models_X \phi$ iff there exists a fucntion $F: I \rightarrow M$ such that $(I, \tau(F)) \models \psi$, where, for $i \in I$, $\tau(F)(i)(x) = F(i)$, and $\tau(F)(i)(y) = \tau(i)(y)$ for every $y \in \mathrm{dom}(X) - \{ x \}$.
	\item $\phi$ is $\forall x \psi$. Then $\mathcal{M} \models_X \phi$ iff $(I \times M, \tau(M)) \models \psi$, where, for $(i, a) \in I \times M$, $\tau(M)((i, a))(x) = a$, and $\tau(M)((i, a))(y) = \tau(i)(y)$ for every $y \in \mathrm{dom}(X) - \{ x \}$.
\end{enumerate}
\end{definition}

	Usually dependence logic and its variants are defined with respect to teams $X = (I, \tau)$ such that for $i \neq j \in I$ we have $\tau(i) \neq \tau(j)$. Clearly to every team $X$ as in Definition \ref{team} we can associate canonically a team $X'$ satisfying this additional assumption. Notice that if $\phi$ is as in Definition \ref{semantics} then $\mathcal{M} \models_X \phi$ iff $\mathcal{M} \models_{X'} \phi$.

	We now prove the announced characterization of the expressive power of $\mathrm{GDep}(L)$. For a quick introduction to second-order logic see e.g. \cite[Chapter 6]{vaananen}).

	\begin{corollary} For every vocabulary $L$ and G-dependence logic sentence $\phi$ in the vocabulary $L$, there exists a $\Sigma^1_{1}$ sentence $\Phi$ in the vocabulary $L$ such that for every $L$-structure $\mathcal{M}$ we have
		$$\mathcal{M} \models \phi \;\; \Leftrightarrow \;\;  \mathcal{M} \models \Phi.$$
Conversely, for every vocabulary $L$ and $\Sigma^1_{1}$ sentence $\Phi$ in the vocabulary $L$ there exists a G-dependence logic sentence $\phi$ in the vocabulary $L$ such that the above holds.
\end{corollary}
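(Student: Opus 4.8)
The plan is to deduce the result from the known equivalence between ordinary dependence logic and existential second-order logic, by showing that $\mathrm{GDep}(L)$ and $\mathrm{Dep}(L)$ have exactly the same expressive power at the level of sentences. By \cite[Chapter 6]{vaananen}, a class of $L$-structures is definable by a $\mathrm{Dep}(L)$-sentence if and only if it is definable by a $\Sigma^1_{1}$-sentence; so it suffices to translate $\mathrm{GDep}(L)$-sentences into equivalent $\mathrm{Dep}(L)$-sentences and conversely. The entire mathematical content of the translation is supplied by Proposition \ref{Jouko_prop_1}, so the argument amounts to lifting those atom-level equivalences through the logical operations.

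Concretely, I would define a map $(\cdot)^{\ast}\colon \mathrm{GDep}(L) \to \mathrm{Dep}(L)$ by recursion on formulas: it fixes literals and identity atoms, commutes with $\wedge$, $\vee$, $\exists x$ and $\forall x$, and sends a $G$-dependence atom ${{\Bumpeq}\mkern-2mu}(x,y)$ (with $x = \{x_0,\dots,x_{n-1}\}$) to the $\mathrm{Dep}(L)$-formula $\bigwedge_{i<n} \dep((x-x_i)y, x_i)$. Symmetrically, I would define $(\cdot)^{\dagger}\colon \mathrm{Dep}(L) \to \mathrm{GDep}(L)$, fixing literals and identity atoms, commuting with the connectives and quantifiers, and sending a functional dependence atom $\dep(x,y)$ (with $y = \{y_0,\dots,y_{n-1}\}$) to $\bigwedge_{i<n} {{\Bumpeq}\mkern-2mu}(y_i, x)$. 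Both translations introduce no new variables, so a sentence is sent to a sentence.

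It then remains to check that the translations preserve truth: for every $L$-structure $\mathcal{M}$, every team $X$, and every $\phi \in \mathrm{GDep}(L)$ one has $\mathcal{M} \models_X \phi$ iff $\mathcal{M} \models_X \phi^{\ast}$, and dually for $(\cdot)^{\dagger}$. This is proved by induction on the complexity of $\phi$. In the atomic case, the equivalence between ${{\Bumpeq}\mkern-2mu}(x,y)$ and $\bigwedge_{i<n} \dep((x-x_i)y, x_i)$ is the equivalence of the first and last expressions in line (2) of Proposition \ref{Jouko_prop_1}, once one unfolds the satisfaction of the conjunction via clauses (1) and (3) of Definition \ref{semantics}; dually, the equivalence between $\dep(x,y)$ and $\bigwedge_{i<n} {{\Bumpeq}\mkern-2mu}(y_i, x)$ is line (1). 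The inductive steps for $\wedge$, $\vee$, $\exists$ and $\forall$ are immediate, since clauses (3)--(6) of Definition \ref{semantics} interpret these operations identically in $\mathrm{GDep}(L)$ and $\mathrm{Dep}(L)$ and refer only to the satisfaction of the immediate subformulas in (possibly split or otherwise modified) teams, to which the induction hypothesis applies. Composing $(\cdot)^{\ast}$ with the cited characterization yields the first half of the corollary, and composing $(\cdot)^{\dagger}$ with it yields the converse.

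There is no genuine difficulty beyond bookkeeping; the only point requiring care is the inductive preservation of truth through the team-splitting disjunction (clause (4)) and through the quantifiers (clauses (5)--(6)), where one substitutes a subformula by one with the same satisfaction relation \emph{on all teams}. Because the team semantics of Definition \ref{semantics} is compositional and local, this substitution-of-equivalents principle holds, and the induction goes through. The part that is deep rather than routine is external to the paper, namely the equivalence $\mathrm{Dep}(L)\equiv\Sigma^1_{1}$ imported from \cite[Chapter 6]{vaananen}.
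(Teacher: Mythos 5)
Your proposal is correct and takes essentially the same route as the paper: the paper's proof is exactly the combination of the atom-level equivalences of Proposition \ref{Jouko_prop_1} with ``a straightforward induction'' on formulas, followed by an appeal to Corollary 6.2 and Theorem 6.15 of \cite{vaananen} for the equivalence of $\mathrm{Dep}(L)$ with $\Sigma^1_1$. You have simply made explicit the translations and the compositionality argument that the paper leaves implicit.
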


	\begin{proof} This follows from Proposition \ref{Jouko_prop_1} (and a straightforward induction), and Corollary 6.2 and Theorem 6.15 of \cite{vaananen}
\end{proof}

\end{document}